\def\volume{\operatorname{vol}}
\def\op{\operatorname}
\begin{document}

\newtheorem{Thm}{Theorem}[section]
\newtheorem{Def}[Thm]{Definition}
\newtheorem{Lem}[Thm]{Lemma}
\newtheorem{Rem}[Thm]{Remark}

\newtheorem{Problem}[Thm]{Problem}

\newtheorem{Cor}[Thm]{Corollary}
\newtheorem{Prop}[Thm]{Proposition}
\newtheorem{Example}[Thm]{Example}
\newcommand{\g}[0]{\textmd{g}}
\newcommand{\pr}[0]{\partial_r}
\newcommand{\dif}{\mathrm{d}}
\newcommand{\bg}{\bar{\gamma}}
\newcommand{\md}{\rm{md}}
\newcommand{\cn}{\rm{cn}}
\newcommand{\sn}{\rm{sn}}
\newcommand{\seg}{\mathrm{seg}}

\newcommand{\Ric}{\mbox{Ric}}
\newcommand{\Iso}{\mbox{Iso}}
\newcommand{\ra}{\rightarrow}
\newcommand{\Hess}{\mathrm{Hess}}
\newcommand{\RCD}{\mathsf{RCD}}

\title{2-regular points in Ricci limit spaces}
\author{Lina Chen}
\address[Lina Chen]{School of Mathematics and Statistics, Nanjing University of Science and Technology, Nanjing China}

\email{chenlina@njust.edu.cn}
\thanks{Supported partially by NSFC Grant 12471046, Jiangsu NSF Grant BK20240185 and the Fundamental Research Funds for the Central Universities No. 30923010201.}

\maketitle

\begin{abstract}

\setlength{\parindent}{10pt} \setlength{\parskip}{1.5ex plus 0.5ex
minus 0.2ex} 

In this note, we will  show that if a measured Gromov-Hausdorff limit space of a sequence of Riemannian manifolds with lower Ricci curvature bound contains a  $2$-regular point which lies in the interior of a geodesic, then it is $2$-rectifiable. And we will also give some properties about $2$-regular points.

 \end{abstract}

\section{Introduction}

A Ricci limit space $(X, d, \nu, x)$ is a measured Gromov-Hausdorff limit of a sequence of complete $n$-manifolds $(M_i, g_i, \underline{\volume}_i, x_i)$ with Ricci curvature lower bound $\op{Ric}_{M_i}\geq -(n-1)$ where the renormalized measure $\underline{\volume}_i(\cdot)=\frac{\volume(\cdot)}{\volume(B_1(x_i))}$ (\cite{CC1}). For any $x\in X$, any $r_j\to 0$, by Gromov's precompactness theorem,  passing to a subsequence, $(r_j^{-1}X, x)=(X, r_j^{-1}d, x)$ is Gromov-Hausdorff convergent to a length space $(T_x, x^*)$ which is called a tangent cone at $x$. And $x$ is $k$-regular if each tangent cone at $x$ is isometric to $(\Bbb R^k, 0)$, for some integer $k>0$. The set of $k$-regular points is denoted by $\mathcal R_k$. The regular set $\mathcal R=\cup_k \mathcal R_k$ and the singular set $\mathcal S=X\setminus \mathcal R$. By \cite{CC1}, $\mathcal R$ has full measure and in the
 non-collapsing case,  i.e., $\volume(B_1(x_i))\geq v>0$ for all $i$ large, $\mathcal R=\mathcal R_n$ and 
 $\nu$ is  equivalent to the $n$-dimensional Hausdorff measure $\op{Haus}^n$. In the collapsing case, i.e., $\volume(B_1(x_i))\to 0$, Colding-Naber \cite{CN} improved the result in \cite{CC1} by showing that there is a unique integer $k$ such that $\mathcal R_k$ has full measure and thus $X$ is $k$-rectifiable.   And by \cite{Hon} (cf. \cite{KL}), for any $l>k$, $\mathcal R_l=\emptyset$. A problem is 
\begin{Problem}
Assume that a sequence of $n$-manifolds $(M_i, g_i, \underline{\volume}_i, x_i)$ is measured Gromov-Hausdorff convergent to $(X, d, \nu, x)$ with $\op{Ric}_{M_i}\geq -(n-1)$ and $\volume(B_1(x_i))\to 0$. If $(X, d, \nu)$ is $k$-rectifiable, is there $l<k$ such that $\mathcal R_l\neq \emptyset$?
\end{Problem}

In  \cite{Ch}, we have showed that if $\mathcal R_1\neq \emptyset$, then the Ricci limit space $X$ is a $1$-dimensional topological manifold. And the same result holds for $\op{RCD}(K, N)$-spaces by \cite{KiLa}.

 In this note we will study the $2$-regular points and show that
 \begin{Thm} \label{main-thm}
 If a Ricci limit space $(X, d, \nu)$ contains a $2$-regular point which lies in the interior of a geodesic, then $\mathcal R=\mathcal R_2$, i.e., $X$ is $2$-rectifiable. 
 \end{Thm}
 
Note that for $k\geq 2$, $k$-regular points may lie in no geodesics like the pole of a paraboloid. By \cite{CN2}, for $k\geq 3$, there is a Ricci limit space which contains a $k$-regular point $y$ that for any two unit speed geodesics $\gamma_1, \gamma_2$ from $y$, any $\theta\in [0, \pi]$, there is $t_i\to 0$ such that the angle $\measuredangle\gamma_1(t_i)y\gamma_1(t_i)$ converges to $\theta$. Here we will show that for $2$-regular point, this will not happen.
\begin{Prop} \label{main-2}
For a Ricci limit space $(X, d, \nu)$ and $x\in \mathcal R_2$,  there are $\epsilon_0>0, r_0>0$ such that for $0\leq \epsilon\leq \epsilon_0, 0<r\leq r_0$ if 
$d_{GH}(B_r(x), B_r(0))\leq \epsilon r$ and 
$x_1, x_2\in X$ satisfying that $d(x_1, re_1)\leq r\epsilon, d(x_2, -re_1)\leq r\epsilon$, for $c_i$ which is a unit speed minimal geodesic from $x$ to $x_i$, $i=1,2$, we have that for any $s$, $$\frac{d(c_1(s), c_2(s))}{2s}\geq 1-\Psi(\epsilon),$$
where $\Psi(\epsilon)\to 0$ as $\epsilon\to 0$, $B_r(0)\subset \Bbb R^2$ and $e_1$ is a unit vector in $\Bbb R^2$.
\end{Prop} 

 Another open problem in collapsing Ricci limit spaces is the following.
\begin{Problem}\cite[Open problem 3.4]{Na}
Assume that a sequence of $n$-manifolds $(M_i, g_i, \underline{\volume}_i, x_i)$ is measured Gromov-Hausdorff convergent to $(X, d, \nu, x)$ with $\op{Ric}_{M_i}\geq -(n-1)$ and $\volume(B_1(x_i))\to 0$. If $(X, d, \nu)$ is $k$-rectifiable, is there an open subset of full $\nu$-measure in $X$ that is a $k$-dimensional topological manifold ? 
\end{Problem}

In the non-collapsing case, \cite{CC1} shows that a Ricci limit space has an open full measure subset which admits a smooth manifold structure. In the collapsing case, if $k=1$, by \cite{Hon}  (cf. \cite{Ch}), $X$ is a $1$-dimensional topological manifold and for $k\geq 3$ there are counterexamples \cite{HNW, Zhou} of Problem 1.4 ($k\geq 4$ were constructed in \cite{HNW} and then \cite{Zhou} gave counterexamples for $k\geq 3$). For $k=2$, we know that almost all $2$-regular points lie in the interior of some geodesics. And the following structure holds:

 \begin{Thm} \label{pent-1}
 If $x\in \mathcal R_2$ lies in the interior of a geodesic $\gamma$, then there is $\epsilon>0$ such that there is a ``$(2,\epsilon)$-strainer" at $x$.
 \end{Thm}
 
 In Theorem\ref{pent-1}, the ``$(2,\epsilon)$-strainer" is a little different with the one in Alexandrov spaces where the angle of two geodesics beginning with the same point can be well defined however is impossible for metric measure spaces with some kind of  Ricci curvature lower bound (see the discussion before the proof of Theorem\ref{pent-1} in Sec. 3). We call it ``strainer" for the same property as in Alexandrov spaces that there are geodesics such that in any small scale, they are close to a norm frame in a Euclidean space (see the proof of Theorem\ref{pent-1}). 
 In Alexandrov spaces, if there is a $(m, \epsilon)$-strainer at $x$ where $m$ equals the local strainer number at $p$ (the supremum $m$ such that there exists an $m$-strainer), then it is also a strainer for each points in a neighborhood $U_x$ of $x$ and thus $U_x$ is bi-Lipschitz homeomorphic to an open region in $\Bbb R^m$ (cf. \cite{BBI}). However for Ricci limit spaces (or $\op{RCD}$-spaces), so far, this strainer's properties is not sufficient.

 \begin{Rem}
 \begin{enumerate}
\item[(1)] For a $2$-dimensional non-collapsing Ricci limit space, by \cite{LS}, it is a Alexandrov space. In fact, \cite{LS} derived this result in more general non-collapsing metric measure spaces (called non-collapsed $\op{RCD}(K, 2)$-spaces).
\item[(2)]For a $k$-regular point, $k\geq 3$, that lies in the interior of a geodesic, we also have some properties like Theorem\ref{pent-1} (see Proposition 3.2).
\item[(3)] For $\op{RCD}(K, N)$-spaces, Theorem\ref{main-thm} holds if Kapovitch-Li's result \cite{KL} (Theorem 2.6) holds. 
\end{enumerate}
\end{Rem}

The author would like to thank Bangxian Han, Nan Li, Xueping Li and Jiayin Pan's many helpful discussions and advices in the first version.

\section{Preliminaries}

In this section, we will supply some definitions and properties that will be used in the proofs of the main results.
 
 Assume a sequence of complete $n$-manifolds $(M_i, g_i, \underline{\volume}_i, x_i)$ is measured Gromov-Hausdorff convergent to $(X, d, \nu, x)$ with $\op{Ric}_{M_i}\geq -(n-1)$ and $\underline{\volume}_i=\frac{\volume}{\volume(B_1(x_i))}$ where we denote it by $(M_i, g_i, \underline{\volume}_i, x_i)\to (X,  d, \nu, x)$ or $d_{mGH}((M_i, , g_i, \underline{\volume}_i, x_i), (X, d, \nu, x))\to 0$. If we only consider the Gromov-Hausdorff convergence, we also write as $(M_i, x_i)\to (X, x)$ or $d_{GH}((M_i, x_i), (X, x))\to 0$. $(X, d, \nu)$ is called a Ricci limit space. We will use the following splitting theorem (\cite{CC}) and sharp H\"older continuity of tangent cones (\cite{CN}) in Ricci limit spaces from time to time.
  \begin{Thm}[Splitting theorem \cite{CC}]
 Let $(X, x)$ be a Gromov-Hausdorff limit of a sequence of manifolds $(M_i, x_i)$ with $\mathrm{Ric}_{M_i}\geq -(n-1)\delta_i, \delta_i\rightarrow 0.$ If $X$ contains a line, then $X$ splits isometrically with a $\mathbb R^1$-factor i.e., $X=\mathbb{R}^1\times Y$ for some length space $Y$.
 \end{Thm}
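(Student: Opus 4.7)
The strategy is the Ricci-limit adaptation of Cheeger--Gromoll's splitting argument. Fix a line $\gamma:\mathbb R\to X$, set $q:=\gamma(0)$, and choose $q_i\to q$ in $M_i$. For each large $R$ pick points $p_i^\pm\in M_i$ converging to $\gamma(\mp R)$; then the excess
\[
e_i := d(p_i^+,q_i)+d(q_i,p_i^-)-d(p_i^+,p_i^-)
\]
tends to $0$, so $q_i$ lies near an almost-minimizing segment from $p_i^+$ to $p_i^-$.

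Next I would introduce the approximate Busemann functions $b_i^\pm(x):=d(x,p_i^\pm)-R$. The triangle inequality gives $b_i^++b_i^-\ge 0$ with $(b_i^++b_i^-)(q_i)=e_i\to 0$, while the Laplacian comparison theorem applied to $\op{Ric}_{M_i}\ge -(n-1)\delta_i$, $\delta_i\to 0$, yields $\Delta b_i^\pm\le \eta_i$ on $B_r(q_i)$ in the barrier sense, with $\eta_i\to 0$ once $R\to\infty$ and then $i\to\infty$. The Abresch--Gromoll excess estimate upgrades the pointwise vanishing at $q_i$ to uniform closeness $b_i^++b_i^-\to 0$ on $B_r(q_i)$.

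The central analytic step is to replace $b_i^+$ by the harmonic function $h_i^+$ on $B_r(q_i)$ with Dirichlet data $b_i^+$. Standard elliptic theory combined with the supersolution control above gives $h_i^+\to b_i^+$ in $W^{1,2}$, while the integrated Bochner formula together with the Cheeger--Colding segment inequality promotes the Ricci bound and the $W^{1,2}$ closeness to
\[
\frac{1}{\vol(B_r(q_i))}\int_{B_r(q_i)}\Big(|\Hess h_i^+|^2+\big||\nabla h_i^+|^2-1\big|\Big)\;\longrightarrow\; 0.
\]
Using $h_i^+$ as a near-distance coordinate and identifying transverse directions via a slice along the flow of $\nabla h_i^+$, one obtains a map $B_r(q_i)\to\mathbb R\times Z_i^r$ whose Gromov--Hausdorff distortion tends to $0$. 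A diagonal extraction as $i\to\infty$ and then $r,R\to\infty$ produces a global isometric splitting $X\cong\mathbb R\times Y$.

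The main obstacle is the integrated Hessian estimate: turning the pointwise supersolution bound on $b_i^\pm$ into $L^2$-smallness of $\Hess h_i^+$ requires the segment inequality, which exchanges space averages for line integrals of $\op{Ric}$ along minimizing geodesics. Once that quantitative Bochner step is in hand, the splitting is assembled from Gromov--Hausdorff compactness and a diagonal extraction, with the factor $Y$ identified as the limit of level sets of $h_i^+$.
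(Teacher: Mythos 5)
The paper does not prove this statement at all: it is quoted verbatim as background (Theorem 2.1) with a citation to Cheeger--Colding \cite{CC}, so there is no internal proof to compare against. What you have written is a faithful outline of the actual Cheeger--Colding argument: excess functions from points far out on the line, Laplacian comparison under $\op{Ric}\geq -(n-1)\delta_i$, the Abresch--Gromoll estimate to make $b_i^++b_i^-$ uniformly small on a ball, harmonic replacement with $C^0$ and $W^{1,2}$ closeness, and the integrated Bochner inequality plus the segment inequality to get the $L^2$ Hessian bound, followed by the almost-splitting statement and a diagonal limit in $i$, $r$, $R$. That is the right proof and you have identified the genuinely hard step (the quantitative Bochner/segment-inequality estimate) correctly. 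One caveat: the phrase ``identifying transverse directions via a slice along the flow of $\nabla h_i^+$'' glosses over a real issue, since the gradient flow of $h_i^+$ is not controlled at this level of regularity; in \cite{CC} the product structure is instead extracted from the quantitative Pythagorean theorem, i.e. the segment inequality is applied a second time to pairs of points to show that $d(x,y)^2$ is almost $|h_i^+(x)-h_i^+(y)|^2+d(\bar x,\bar y)^2$ with $\bar x,\bar y$ projections to a level set, and that map is the Gromov--Hausdorff approximation to a product. With that substitution your sketch matches the standard proof; as written, the flow-based slicing would need justification you have not supplied.
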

 
By the splitting theorem, for any tangent cone in a Ricci limit space $(X, d, \nu)$, if it contains a line, it will split.  A geodesic $\gamma:[0, l]\to X$ is called a limit geodesic if there are geodesics $\gamma_i:[0, l_i]\to M_i$ with $l_i\to l$ and $\gamma_i\to \gamma$ pointwise.   In a metric space $(X, d)$, two unit speed geodesics $\gamma_1$ and $\gamma_2$ that defined on the unit interval  are said branching if there is $t\in (0, 1)$ such that $\gamma_1(s)=\gamma_2(s)$ for all $s\in [0, t]$ and $\gamma_1(s)\neq \gamma_{2}(s)$ for all $s\in (t, 1]$.  It was proved in \cite{Deng} that geodesics cannot branch in a Ricci limit space (in fact, Deng derived this result in a more general case).  
 
 \begin{Thm}[\cite{Deng}]
 Any geodesic in a Ricci limit space is non-branching.
 \end{Thm}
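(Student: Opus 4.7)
The plan is to argue by contradiction, using the splitting theorem together with a blow-up at the putative branching point. Suppose $\gamma_1,\gamma_2\colon [0,1]\to X$ are unit-speed geodesics with $\gamma_1|_{[0,t_0]}=\gamma_2|_{[0,t_0]}$ for some $t_0\in(0,1)$ but $\gamma_1(s)\neq \gamma_2(s)$ for every $s\in(t_0,1]$. Write $p:=\gamma_1(t_0)$ and pick a sequence of scales $r_i\to 0$. Gromov pre-compactness yields, along a subsequence, a tangent cone $(T_p X,o)$ of the rescaled pointed spaces $(r_i^{-1}X,p)$. Both $\gamma_1$ and $\gamma_2$ have length $t_0$ before $p$ and $1-t_0$ after $p$, so under rescaling by $1/r_i$ both segments stretch without bound. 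Passing to a further subsequence, one obtains bi-infinite limit geodesics $\tilde\gamma_1,\tilde\gamma_2$ through $o$ --- i.e., \emph{lines} in $T_p X$ --- that coincide on the incoming half-line $(-\infty,0]$ inherited from the shared piece $\gamma_1|_{[0,t_0]}$.

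By Theorem~2.1, the existence of the line $\tilde\gamma_1$ gives an isometric splitting $T_pX\cong \mathbb{R}\times Y$ with $\tilde\gamma_1(s)=(s,y_0)$. Writing $\tilde\gamma_2(s)=(a(s),b(s))$, the agreement on $(-\infty,0]$ forces $a(s)=s$ and $b(s)=y_0$ for $s\le 0$. Unit-speed geodesics in a metric product factor as constant-speed geodesics with $|a'|^2+\|b'\|^2=1$, and these component speeds cannot jump; hence $a(s)=s$ and $b(s)=y_0$ for all $s\in\mathbb{R}$, so $\tilde\gamma_2=\tilde\gamma_1$. Translated back to the original scales, this means that for every sequence $r_i\to 0$ the rescaled geodesics collapse to a single line, i.e.\ $\Delta(s):=d(\gamma_1(s),\gamma_2(s))=o(s-t_0)$ as $s\searrow t_0$.

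To finish, the infinitesimal tangency at $p$ must be upgraded to $\Delta\equiv 0$ on $(t_0,1]$. The plan is to propagate the argument along $\gamma_1$: at each $s>t_0$, apply the same blow-up at $\gamma_1(s)$, using Theorem~2.2 (sharp H\"older continuity of tangent cones along a geodesic) to transport the splitting $T_{\gamma_1(s)}X\cong \mathbb{R}\times Y_s$ continuously in $s$, and force $\gamma_2$ to remain on the $\mathbb{R}$-factor at every such scale. This step --- ruling out an arbitrarily tangential branching in which $\Delta(s)/(s-t_0)\to 0$ yet $\Delta(s)>0$ --- is the main obstacle; the qualitative splitting produced in the first two paragraphs only gives infinite-order tangency at $p$, and turning this into a global coincidence $\gamma_1\equiv \gamma_2$ beyond $t_0$ requires the quantitative H\"older control of tangent cones and a careful ODE-type propagation, which is the technical crux of the proof.
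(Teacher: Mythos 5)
There is a genuine gap, and you have named it yourself: the blow-up-plus-splitting argument at the branch point $p=\gamma_1(t_0)$ only yields that the two branches are tangent at $p$ to first order (every tangent cone at $p$ sees a single line, hence $d(\gamma_1(s),\gamma_2(s))=o(s-t_0)$), and your final paragraph concedes that the passage from this infinitesimal statement to $\gamma_1\equiv\gamma_2$ on $(t_0,1]$ is left open. But that passage \emph{is} the theorem: tangent-cone information at a single point can never rule out a higher-order tangential branching, since rescaling limits cannot distinguish two curves that agree to first order. The proposed fix --- ``transport the splitting $T_{\gamma_1(s)}X\cong\mathbb{R}\times Y_s$ continuously in $s$ using Theorem~2.2 and run an ODE-type propagation'' --- is not an argument as stated: Theorem~2.2 compares the Gromov--Hausdorff geometry of small balls at different interior points of one limit geodesic, it does not identify the splitting directions of the (possibly non-unique) tangent cones at nearby points, nor does it give the quantitative excess/angle estimates one would need to control the growth of $\Delta(s)=d(\gamma_1(s),\gamma_2(s))$ from scale to scale. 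Also note that a priori you must treat branch points at which the two geodesics separate arbitrarily slowly, where the relevant comparison scale $\Delta(s)$ is much smaller than $s-t_0$, so the blow-ups you would need are at the scale $\Delta(s)$, not $s-t_0$; controlling these requires exactly the kind of quantitative H\"older continuity along the geodesic (in the spirit of Colding--Naber) that constitutes the technical core of Deng's work.

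For context: the paper you are checking does not prove this statement at all; it quotes it from \cite{Deng}, where the non-branching of geodesics is established in the more general $\RCD(K,N)$ setting precisely by proving a H\"older continuity theorem for tangent cones along geodesics and then running a delicate quantitative argument. So your first two paragraphs are a correct (and standard) reduction, but the missing third step is not a routine completion --- it is the substance of the cited result, and as written your proposal does not prove the theorem.
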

 By above theorem, geodesics in a Ricci limit space are all limit geodesics where we will still write as limit geodesics sometimes.  
 
  \begin{Thm}[Sharp H\"older continuity \cite{CN}]\label{holder}
 Let $(X, d,\nu)$ be a Ricci limit space. And let $\gamma:[0, l]\to X$ be a unit speed limit geodesic. There exist $\alpha(n), C(n), r_0(n)>0$ such that for any $\delta>0$, $0<r<r_0\delta l$, $s, t\in(\delta l, l-\delta l)$,
 $$d_{GH}(B_r(\gamma(s)), B_r(\gamma(t)))\leq \frac{C}{\delta l}r|s-t|^{\alpha(n)}.$$
 \end{Thm}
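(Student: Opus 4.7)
The plan is to work at the level of the approximating Riemannian manifolds $(M_i, g_i)$, where $\gamma_i : [0, l_i] \to M_i$ are smooth geodesics with $\gamma_i \to \gamma$ and $l_i \to l$ (which exist by the definition of limit geodesic). For fixed $s, t \in (\delta l, l - \delta l)$, the goal is to construct a Gromov-Hausdorff $\varepsilon$-approximation $F : B_r(\gamma_i(s)) \to B_r(\gamma_i(t))$ with $\varepsilon \leq C r |s - t|^{\alpha(n)} / (\delta l)$; passing to the limit then yields the stated bound on $X$.

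First I would construct, on a tubular neighborhood of $\gamma_i|_{[\delta l/2,\, l_i - \delta l/2]}$, a function $h$ playing the role of a Busemann function in the $\gamma$-direction---for instance, a heat-flow regularization of $d(\cdot, \gamma_i(0))$ at an appropriately chosen time. The heuristic is that at interior points of a minimizing segment the tangent cone splits off an $\mathbb{R}$-factor (Theorem 2.1), forcing $h$ to be nearly affine in that factor. Quantitatively, one expects $|\nabla h| \approx 1$ and $\int |\Hess h|^2$ small on the tube, which follows by integrating Bochner's identity together with the smallness of $|\nabla h|^2 - 1$ and of $\Delta h$ in $L^2$.

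Next, take $F$ to be the time-$(t-s)$ map $\Phi^{t-s}$ of the gradient flow $\Phi^\tau$ of $h$, restricted to $B_r(\gamma_i(s))$; one checks $\Phi^{t-s}(\gamma_i(s)) \approx \gamma_i(t)$ since $h \circ \gamma_i$ is nearly linear. The distortion of $F$ is controlled by the integrated Hessian along flow lines: $\tfrac{d}{d\tau} |d\Phi^\tau(v)|^2$ is bounded in absolute value by $2|\Hess h| \cdot |d\Phi^\tau(v)|^2$, so the bi-Lipschitz error is at most $\exp\bigl(\int_s^t |\Hess h|(\Phi^\tau(\cdot))\, d\tau\bigr) - 1$. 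The segment inequality of Cheeger-Colding then converts the $L^2$ bound on $|\Hess h|$ over the tube into control on a definite fraction of such line integrals, and a Moser/De Giorgi iteration upgrades this averaged estimate to the pointwise (hence Gromov-Hausdorff) bound, with the exponent $\alpha(n)$ arising from a dimensional count balancing the local doubling constant against the $(1,2)$-Poincar\'e inequality.

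The principal obstacle is precisely this last upgrade: producing an exponent $\alpha(n) > 0$ depending only on $n$ with the specific scaling $r|s-t|^{\alpha(n)}$. One must tune the parabolic smoothing time and the tube radius commensurate with $|s-t|$ and $r$ so that all scales match, and close the iteration at this exponent; this is the heart of Colding-Naber's argument. A secondary difficulty is uniformity in $i$: all constants must depend only on $n$ and $\delta$, handled through Bishop-Gromov volume comparison and the dimension-only Poincar\'e inequality available under a Ricci lower bound.
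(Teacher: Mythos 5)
This statement is not proved in the paper at all: it is quoted verbatim as background from Colding--Naber \cite{CN} (it is essentially their main theorem), so there is no internal proof to compare your attempt against. Judged against the actual Colding--Naber argument, your outline does identify the right ingredients and the right order of play: work on the smooth approximators $M_i$, build a parabolic (heat-flow) approximation $h$ of the distance function near the interior of $\gamma_i$, use Bochner together with the smallness of $|\nabla h|^2-1$ and $\Delta h$ to get scale-invariant $L^2$ Hessian bounds on a tube, move $B_r(\gamma_i(s))$ to $B_r(\gamma_i(t))$ by the gradient flow of $h$, and control the distance distortion of the flow by integrating $|\Hess h|$ along flow lines via the segment inequality, with the H\"older exponent emerging from iterating over scales between $s$ and $t$. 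This is a faithful roadmap, not a proof: the quantitative heart (the choice of smoothing time and tube size relative to $|s-t|$ and $r$, the degeneration of the Hessian estimate near the endpoints, and the dyadic/ODE iteration that actually produces a dimensional $\alpha(n)$) is exactly what you defer to ``the heart of Colding--Naber's argument.''

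One named step is wrong as stated: there is no Moser/De Giorgi iteration upgrading the averaged (segment-inequality) estimate to a \emph{pointwise} bound on $|\Hess h|$, and no such pointwise bound can hold under only a Ricci lower bound. The way Colding--Naber close the argument is different in kind: a Gromov--Hausdorff $\epsilon$-approximation does not need to be defined at every point, so one uses the segment inequality and maximal-function/covering arguments to show that the flow map has small distance distortion for all pairs outside a set of small measure, discards the bad set, and notes that by volume doubling the remaining points still form a sufficiently dense net in $B_r(\gamma(s))$, which is all that GH closeness requires. If you replace your ``pointwise upgrade'' step by this measure-theoretic selection argument, your sketch matches the known proof in structure, but as written that step is a genuine gap.
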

 The sharp H\"older continuity implies that 
  \begin{Thm}[\cite{CN}] \label{ae-full}
 Let $(X, d, \nu)$ be a Ricci limit space. Then
 \begin{enumerate}
\item[(2.4.1)]
For $\nu \times \nu$ almost every pair $(a_1, a_2)\in A_1 \times A_2$, where $A_1$
and $A_2$ are subsets of $X$ that contained in a bounded ball, there exists a limit
geodesic from $a_1$ to $a_2$ whose interior lies in some $\mathcal{R}_l$,
$l$ is an integer;
\item[(2.4.2)]
For $\nu \times \nu$ almost every pair $(x, y)\in X \times X$, they are in the interior of a limit geodesic; 
 \item[(2.4.3)]
  There is an integer $k$, such that $\nu(X\setminus \mathcal{R}_k)=0$.
  \end{enumerate}
\end{Thm} 

Theorem \ref{ae-full} implies that in a Ricci limit space $(X, d, \nu)$, there is $k$ such that the subset of points which are $k$-regular and lie in the interior of a limit geodesic has full $\nu$-measure.
Compared with the sharp H\"older continuity, in \cite{KL}, Kapovitch and Li derived that 
 \begin{Thm}[\cite{KL}] \label{same-dim}
 For a constant speed geodesic $\gamma: [0, 1]\to X$ where $X$ is a Ricci limit space, the rectifiable dimension of same scale tangent cone at $\gamma(t)$ is constant for any $t\in (0,1)$.
 \end{Thm}
 
 This theorem and the result in \cite{Hon} or \cite{Ch} implies that
 \begin{Lem}
 For a Ricci limit space $X$, if one interior point of a geodesic $\gamma $ in $X$ lies in $\mathcal R_2$, then the whole interior of $\gamma$ is in $\mathcal R_2$.
 \end{Lem}
 
 In fact by Theorem\ref{same-dim}, for any interior points $\gamma(t)$ of $\gamma$, the tangent cone $T$ at $\gamma(t)$ has rectifiable dimension $2$. And thus \cite{Hon} or \cite{Ch} implies that $T$ is a half plane or $\Bbb R^2$. And by the sharp H\"older continuity in Ricci limit spaces \cite{CN} (Theorem 2.2), $T$ is $\Bbb R^2$. 
 
 \begin{Def} A metric measure space $(X, d, \nu)$ is called $k$-rectifiable if there are countable $X_i\subset X,  i\in \mathbb N$ such that for each $i$,
 \begin{enumerate}
\item[(2.4.1)] $X_i$ is $\op{Haus}^k$-measurable;
 \item[(2.4.2)] There exists bi-Lipschitz map $\phi_i: X_i\to \mathbb R^k$;
 \item[(2.4.3)] $\nu(X\setminus \cup_{i}X_i)=0$.
 \end{enumerate}
 \end{Def}
 By \cite{CC3} and \cite{CN}, any Ricci limit space is $k$-rectifiable for some integer $k$
 \begin{Thm}[\cite{CC3, CN}]
 Let $(X, d, \nu)$ be a Ricci limit space of a sequence of $n$-dimensional manifolds with lower Ricci curvature bound. Then there is a unique $0<k \leq n$ such that $(X, d, \nu)$ is $k$-rectifiable where $k$ is the dimension of the full $\nu$-measure $k$-regular set $\mathcal R_k$ of $X$. 
 \end{Thm}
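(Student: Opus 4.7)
The plan is to extract the existence and uniqueness of $k$ from the already-cited Theorem \ref{ae-full}(2.3.3), and then produce the countable bi-Lipschitz decomposition of $\mathcal{R}_k$ demanded by Definition 2.4 using charts built from distance functions to suitably chosen reference points. The integer $k$ is the unique one with $\nu(X\setminus\mathcal{R}_k)=0$ from (2.3.3); uniqueness follows because the $\mathcal{R}_l$ are pairwise disjoint (the dimension of a tangent cone $\mathbb{R}^l$ is $l$). The bound $k\le n$ is enforced by Bishop--Gromov volume comparison, since any tangent cone at any point is itself a pointed Gromov--Hausdorff limit of rescalings of the $(M_i, x_i)$ and hence obeys the same volume growth, which excludes an $\mathbb{R}^l$-factor with $l>n$.

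For rectifiability I would construct, at each $x\in\mathcal{R}_k$, a distance-function chart. Since every tangent cone at $x$ is $\mathbb{R}^k$, for every $\varepsilon>0$ there exists a scale $r=r(x,\varepsilon)>0$ and points $p_1(x),\ldots,p_k(x)\in X$ at distance comparable to $r$ from $x$ whose configuration $\varepsilon$-approximates an orthonormal $k$-frame in $\mathbb{R}^k$ after rescaling by $r^{-1}$. Define
$$F_x(y) = \bigl(d(y,p_1(x)) - d(x,p_1(x)),\,\ldots,\,d(y,p_k(x))-d(x,p_k(x))\bigr).$$
The upper Lipschitz bound $|F_x(y)-F_x(y')|\le \sqrt{k}\,d(y,y')$ is immediate. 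The key estimate is a matching lower bound $|F_x(y)-F_x(y')|\ge c(n,\varepsilon)\, d(y,y')$ on a ball $B_{cr}(x)$. This requires the Cheeger--Colding almost-splitting theorem applied simultaneously in the $k$ nearly-orthogonal directions, combined with the segment / Poincar\'e inequality to upgrade the integrated orthogonality of the $\nabla d(\cdot, p_i(x))$ to a pointwise bi-Lipschitz estimate on a definite scale, with the Sharp H\"older continuity (Theorem \ref{holder}) helping to propagate the good frame away from $x$.

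To globalize into a countable decomposition, I would stratify $\mathcal{R}_k$ by the quality of the chart: for integers $j\in\mathbb{N}$, set
$$X_j = \{x\in\mathcal{R}_k : \text{an } F_x \text{ as above exists and is } j\text{-bi-Lipschitz on } B_{1/j}(x)\}.$$
Each $X_j$ is $\op{Haus}^k$-measurable, $\mathcal{R}_k=\bigcup_j X_j$ by Step 2, and covering $X_j$ by countably many balls of radius $1/(2j)$ centered in $X_j$ decomposes it into pieces on which a single $F_x$ provides the required bi-Lipschitz map into $\mathbb{R}^k$, verifying (2.4.1)--(2.4.3). The main obstacle is entirely in the second step: upgrading the infinitesimal statement ``the tangent cone is $\mathbb{R}^k$'' to a macroscopic bi-Lipschitz estimate for $F_x$ with uniform constants, which is precisely the content supplied by the segment inequality and almost-splitting machinery of \cite{CC3, CN}. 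Once that estimate is available, the measurability and covering step are routine.
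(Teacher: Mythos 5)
The paper itself offers no proof of this theorem: it is quoted verbatim from \cite{CC3, CN}, so the only meaningful comparison is with the argument in those references, which your sketch does follow in outline (distance-function charts, segment inequality, almost-splitting, stratification by chart quality, plus the Colding--Naber uniqueness of $k$ from (2.3.3)). Your first step (uniqueness of $k$ via disjointness of the $\mathcal R_l$) and the bound $k\le n$ via Bishop--Gromov on tangent cones are fine.

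The genuine gap is in your key second step. The lower bound $|F_x(y)-F_x(y')|\ge c\,d(y,y')$ does \emph{not} hold on a whole ball $B_{cr}(x)$, and no amount of almost-splitting or segment-inequality machinery will produce it there: being $k$-regular is an infinitesimal condition at $x$ and gives no uniform control of the geometry at scale $d(y,y')$ near other points $y\in B_{cr}(x)$, where the chart can degenerate. What \cite{CC1, CC3} actually prove is measure-theoretic: for each $\epsilon>0$ and suitable scales, $F_x$ is an \emph{additive} $\epsilon r$-almost isometry on a subset of $B_r(x)$ of $(1-\epsilon)$-fraction of the measure, and a further density-point argument across all scales is needed to upgrade these additive errors to the multiplicative $(1+\epsilon)$-bi-Lipschitz bound on a (merely measurable) subset. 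This is precisely why the rectifiability decomposition in Definition 2.4 consists of measurable pieces $X_i$ rather than open sets, and why the manifold-structure question (Problem 1.1, with the counterexamples of \cite{HNW, Zhou} for $k\ge 3$) is a separate and much harder issue. Your stratification $X_j$ and the covering step survive once they are reformulated over these large-measure good sets instead of over balls, but as written the proposal asserts a pointwise estimate that is false in general and silently conflates additive almost-isometry with bi-Lipschitz control.
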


\section{Proofs of the main results}

To derive Theorem\ref{main-thm},  we only need to show that if the whole interior of a geodesic $\gamma$ lies in $\mathcal R_2$, the minimal geodesics between two points which are close with $\gamma$ and lie in the different ``sides" of $\gamma$ intersect with $\gamma$ (see Proposition\ref{main}) and then a similar argument as in \cite{Ch} will give the result. And Proposition 1.3 and Theorem\ref{pent-1} are all derived by a similar observation as in the proof of  Proposition\ref{main}.


 Let $(X, d, \nu)$ be a Ricci limit space. For $x\in \mathcal R_2$, for each $\epsilon_0>0$, there is $r_0>0$ such that for any $r\leq r_0$
 $$d_{GH}(B_r(x), B_r(0))\leq \epsilon_0 r, \,B_r(0)\subset \mathbb R^2.$$
Let $e_1, e_2$ be an orthonormal basis in $\mathbb R^2$. Then for any $\theta\in [0, \pi)$, there are $x_r^{+\theta}, \,x_r^{-\theta}\in X$ such that 
$$d(x_r^{+\theta}, \cos\theta r e_1+\sin\theta re_2)\leq \epsilon_0 r, \quad d(x_r^{-\theta}, -\cos\theta r e_1-\sin\theta re_2)\leq \epsilon_0 r.$$
By the choice of $x_r^{+\theta}, \,x_r^{-\theta}\in X$,
$$\left|d(x, x_r^{\pm\theta})-r\right|\leq \epsilon_0 r, $$
$$d(x, x_r^{-\theta})+d(x, x_r^{+\theta})-d(x_r^{+\theta}, x_r^{-\theta})\leq 3\epsilon_0 r.$$
For a unit speed minimal geodesic $\gamma$, if $x=\gamma(t), t\in (0, 1)$ and $x\in \mathcal R_2$, for $\epsilon>0, r>0$,  $\theta\in [0, \pi)$, we can take $x_r^{\pm\theta}\in X$ as above. Especially, in the following we will always take $x_r^{\pm 0}=\gamma(t\pm r)$.

\begin{Prop} \label{main}
Assume that a Ricci limit space $(X, d, \nu)$ contains a unit speed geodesic $\gamma: [0,1]\to X$ whose interior $\gamma^o$ lies in $\mathcal R_2$. Given $\pi>\theta_0>0$, $\delta>0$, there are $\epsilon_0>0$ and $\delta/2>r_0>0$ such that for any point $x\in \gamma_{\delta}=\left.\gamma\right|_{[\delta,1- \delta]}$,  any $0<r<r_0$, $\theta\in [\theta_0, \pi-\theta_0]$ and $x_r^{\pm\theta}$ as above,  the minimal geodesic $c$ between $x_r^{+\theta}$ and $x_r^{-\theta}$ intersect with $\gamma$ at a point near $x$.
\end{Prop}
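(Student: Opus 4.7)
The plan is to argue by contradiction. Suppose the conclusion fails; by a diagonal extraction we obtain sequences $r_i \to 0$, $t_i \in [\delta, 1-\delta]$ with $t_i \to t_\infty$, and $\theta_i \to \theta_\infty \in [\theta_0, \pi-\theta_0]$, so that with $x_i := \gamma(t_i) \to x_\infty := \gamma(t_\infty) \in \mathcal{R}_2$ the minimal geodesic $c_i$ between $x_{r_i}^{+\theta_i}$ and $x_{r_i}^{-\theta_i}$ fails to meet $\gamma$ near $x_i$. Since $c_i$ has length $(2+o(1))r_i$ with endpoints within $(1+o(1))r_i$ of $x_i$, this forces $c_i \cap \gamma = \emptyset$ outright. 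Set $s_i := d(c_i, \gamma) > 0$, realized by $p_i \in c_i$ and $q_i \in \gamma$.

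\emph{Outer blow-up.} Rescale $(X, r_i^{-1}d, x_i)$: by sharp H\"older continuity (Theorem~2.2) and $x_\infty \in \mathcal{R}_2$, these converge to the tangent cone $(\mathbb{R}^2, 0)$. In the limit $\gamma$ produces a line $L$ through $0$, and in an orthonormal frame with $L = \{y=0\}$ we have $x_{r_i}^{\pm\theta_i} \to \pm p_\theta := \pm(\cos\theta_\infty, \sin\theta_\infty)$. By Arzel\`a--Ascoli and uniqueness of minimizers in $\mathbb{R}^2$, $c_i$ converges to the straight segment from $-p_\theta$ to $+p_\theta$, which meets $L$ transversally at $0$. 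If $\liminf_i s_i/r_i > 0$, the rescaled $c_i$ would stay at distance $\geq \eta > 0$ from $\gamma$, and so would its limit --- contradicting that the limit segment passes through $0 \in L$. Hence $s_i/r_i \to 0$; moreover $p_i, q_i \to 0$ in this outer limit, giving $d(q_i, x_i) = o(r_i)$ and $q_i = \gamma(\tau_i)$ with $\tau_i \to t_\infty$. Read off from this outer limit, the forward tangent direction of $c_i$ at $p_i$ (the direction of the limit segment at its interior point $0$) converges to $+p_\theta$ in the tangent cone at $x_\infty$.

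\emph{Inner blow-up.} Rescale $(X, s_i^{-1}d, q_i) \to (\mathbb{R}^2, 0)$ (again by Theorem~2.2, using $\tau_i \to t_\infty$ and $s_i \to 0$). Now $\gamma$ produces a full line $L'$ through $0$, since its rescaled length diverges on $\tau_i \in [\delta/2, 1-\delta/2]$; choose the frame so $L' = \{y=0\}$ and $p_i \to (0, 1)$, using that $q_i$ is the foot of perpendicular from $p_i$ to $\gamma$. The global constraint $d(c_i, \gamma) \geq s_i$ rescales to $|y| \geq 1$ on the rescaled $c_i$ in the limit. Applying Arzel\`a--Ascoli to the arc-length parametrized rescaled $c_i$ on each compact interval, subsequential limits are geodesic segments in $\mathbb{R}^2$; any such segment through $(0, 1)$ lying in $\{y \geq 1\}$ must be horizontal. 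Consequently the forward tangent direction of $c_i$ at $p_i$, read off from the inner limit, converges to $\pm(1, 0)$ in the same tangent cone at $x_\infty$, identified across scales via the common direction of $\gamma$.

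\emph{Contradiction and main obstacle.} Both blow-ups compute the tangent direction of the same limit geodesic $c_i$ at $p_i$ in the same ambient $\mathbb{R}^2$; consistency forces $+p_\theta = \pm(1, 0)$, hence $\sin\theta_\infty = 0$, contradicting $\theta_\infty \in [\theta_0, \pi-\theta_0]$ with $\theta_0 > 0$. The principal technical issue is the scale-invariance of the extracted tangent direction: one must ensure the outer ($r_i$) and inner ($s_i$) limits refer to the same tangent cone at $x_\infty$ with compatible orthonormal frames (aligning $L$ and $L'$ via the canonical tangent of $\gamma$), and that the tangent direction of $c_i$ at $p_i$ is uniquely determined regardless of scale. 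This rests on the sharp H\"older continuity of Colding--Naber (propagating tangent-cone identifications along $\gamma$ across scales $s_i$ and $r_i$) together with non-branching of limit geodesics (Theorem~2.6/\cite{Deng}). An equivalent packaging, matching the paper's own sketch: the inner blow-up produces two lines in $\mathbb{R}^2$ that must either be non-intersecting and non-parallel --- impossible in $\mathbb{R}^2$ --- or parallel only on a half-line, contradicting the splitting theorem (Theorem~2.1).
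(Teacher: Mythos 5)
Your setup and the first two blow-ups are essentially the paper's: the outer rescaling at scale $r_i$ gives $s_i=d(c_i,\gamma)=o(r_i)$ and puts the foot point $q_i$ at distance $o(r_i)$ from $x_i$, and the inner rescaling at scale $s_i$ centered at $q_i$ gives $\mathbb R^2$ containing the limit of $\gamma$ and the limit of $c_i$ as two parallel lines at distance $1$. The gap is the final step. The statement that the outer limit lets you ``read off the forward tangent direction of $c_i$ at $p_i$'' is not correct: the $r_i$-rescaled picture only records the chord-scale shape of $c_i$, since any deviation of size $o(r_i)$ disappears in that limit; a geodesic that hugs $\gamma$ at distance $\sim s_i$ over a stretch of length $l_i\ll r_i$ and then veers off still converges, at scale $r_i$, to the transversal segment through $0$. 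Likewise there is no common tangent cone in which the outer ($r_i$) and inner ($s_i$) limits both live: they are limits of different sequences of rescaled spaces, and neither sharp H\"older continuity (which compares balls at the same scale at nearby points of $\gamma$) nor non-branching provides a scale-invariant ``direction of $c_i$ at $p_i$'' that must agree in the two limits. So the asserted consistency $+p_\theta=\pm(1,0)$ does not follow, and indeed the scenario your argument is supposed to exclude (transversal at scale $r_i$, parallel at scale $s_i$) is exactly the possibility that still has to be ruled out.

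The paper rules it out with an intermediate-scale argument that your proposal alludes to (``parallel only in a half line'') but never constructs, and which is not equivalent to the direction-consistency claim. Since the limits of $c_i$ and $\gamma$ are parallel at scale $d_i=s_i$ but not at scale $r_i$, one chooses a maximal $l_i$ with $d_i\ll l_i\ll r_i$ such that $d(\gamma(t_i+\tau),c_i)=d_i+o(d_i)$ for $\tau\in(-l_i,l_i')$, then blows up at scale $d_i$ based at the shifted point $y_i=\gamma(t_i-l_i)$. By H\"older continuity the limit is (or splits off) $\mathbb R^2$ with the limit of $\gamma$ a line, while the maximality of $l_i$ forces the limit of $c_i$ to be a line parallel to it only on a half-line, contradicting the splitting theorem. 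Without this (or some substitute quantitative bridge between the two scales), your proof does not close; the rest of your outline, including handling the varying base points $t_i\to t_\infty$ via Theorem 2.2, is fine and parallels the paper's uniformity step.
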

\begin{proof}
We first show that for each $x\in \gamma_{\delta}$, we can find $\epsilon_0>0, r_0>0$ which may depend on $x$ such that the above statement holds.

Argue by contradiction. Assume that there are $\epsilon_i\to 0$, $r_i\to 0$, $x_{r_i}^{\pm \theta}$ such that the minimal geodesic $c_i$ between $x_{r_i}^{\pm\theta}$ does not intersect with $\gamma$.  By the choice of  $x_{r_i}^{\pm\theta}$,  we have that for $i$ large, the distance $d(x, c_i)\ll r_i$ and the point $z_i\in \gamma^o$ that $d(c_i, \gamma)=d(c_i, z_i)$ satisfies $d(x, z_i)\ll r_i$.  In fact, if $d(x, c_i)>C r_i$, for some constant $C>0$, when $r_i\to 0$, $(r_i^{-1}X, x)\to (\mathbb R^2, 0)$, $c_i$ goes to a limit geodesic $c_{\infty}$ that passing through $0$ which is a contradiction for that $d(0, c_{\infty})>C$.  Now let $d(x, c_i)= o(r_i)$ and assume $d(x, z_i)>C r_i$ for some $C>0$. Since $d(x, z_i)<3r_i$, we may assume $(r_i^{-1}X, x)\to (\mathbb R^2, 0), \gamma\to \gamma_{\infty}, z_i\to z_{\infty}$.  Then $d(0, c_{\infty})=0$, $d(z_{\infty}, c_{\infty})=0$, $3>d(0, z_{\infty})>C$, which is contradict to that $\gamma_{\infty}$ is the only line between $0$ and $z_{\infty}$.

Let $d_i=d(c_i, z_i)$. Then $d_i\leq d(c_i, x)\ll r_i$. Assume $d_i>0$ for any large $i$ and assume 
$$(d_{i}^{-1}X, z_{i})\to (Z, z).$$
By the sharp H\"older continuity of \cite{CN} (see Theorem \ref{holder}), for any $z_i\in \gamma^o$, for any $r$ small,
$$d_{GH}(B_r(z_i), B_r(x))\leq crd(x, z_i)^{\alpha}.$$
And thus $(d_{i}^{-1}X, x)\to (\mathbb R^2, 0)$
implies that $Z=\mathbb R^2$.

Note that there are two lines in $Z$, one is the limit $\gamma_{\infty}$ of $\gamma$ and the other is the limit $c_{\infty}$ of $c_{i}$. By the definition of $d_i$, $d(c_{\infty}, \gamma_{\infty})=1>0$ and thus $c_{\infty}$ and $\gamma_{\infty}$ are parallel. 

If $z_i=\gamma(t_i)$, let
$$l_{i1}=\max\{l, \,d(\gamma(t_i-\tau), c_i)< 2d_i, \forall\, \tau\in [0, l)\}.$$
Then by above discussion, 
$$\frac{l_{i1}}{d_i}=\frac{1}{\epsilon_{i1}}\to \infty, , \frac{l_{i1}}{r_i}\to 0, \text{ as }i\to \infty.$$

Let
$$l_{i2}=\max\{l, \,d(\gamma(t_i-l_{i1}-\tau), c_i)< 2^2d_i, \forall\, \tau\in [0, l)\}.$$
Then as above, we have 
$$\left((2d_i)^{-1}X, \gamma(t_i-l_{i1})\right)\to (\Bbb R^2, 0).$$
Assume that $c_i\to c_{\infty}$ and $\gamma\to \gamma_{\infty}$ and $\gamma_{\infty}(0)=0$. Then 
$$d(\gamma_{\infty}(\tau), c_{\infty})\leq 1, \forall\, \tau\in \left[0, \infty\right)$$
which implies that
$$\frac{l_{i2}}{2d_i}=\frac{1}{\epsilon_{i2}}\to \infty, i\to \infty,$$
for $c_{\infty}$ and $\gamma_{\infty}$ are two lines in $\Bbb R^2$.


Now by induction, for $k$, we have $l_{ik}$, 
$$l_{ik}=\max\{l, \,d(\gamma(t_i-\sum_{p=1}^{k-1}l_{ip}-\tau), c_i)< 2^kd_i, \forall\, \tau\in [0, l)\},$$
$$\frac{l_{ik}}{2^{k-1}d_i}=\frac{1}{\epsilon_{ik}}\to \infty, i\to \infty, $$

And for $k$ large,
$$\sum_{p=1}^k \frac{l_{ip}}{r_i}\leq 1, \quad 2^k d_i\geq \frac{r^i}{2}$$
which could not happen for $i$ large.

Secondly, by sharp H\"older continuity Theorem 2.3, we have that given $\delta>0$ small, for each point $x=\gamma(t), t\in [\delta, 1-\delta]$, $\pi>\theta_0>0$, there are $\epsilon_0>0, r_0>0$, such that for  $0<r<r_0$, $\theta\in [\theta_0, \pi-\theta_0]$,  if $x_i=\gamma(t_i)$, $r>|t_i-t|\to 0$, then for large $i$ and any $x_{i, r}^{\pm\theta}$,  the minimal geodesic $c_i$ between $x^{+\theta}_{i, r}$ and $x^{-\theta}_{i, r}$ intersect with $\gamma$ at some point.



Then the result follows by the closeness of $\gamma_{\delta}$.
\end{proof}
Now we derive Theorem\ref{main-thm} by Lemma 2.6 and Theorem\ref{ae-full} as in \cite{Ch}.
\begin{proof}[Proof of Theorem\ref{main-thm}]
By Lemma 2.6 we know that the interior $\gamma^0$ lies in $\mathcal R_2$. By Proposition\ref{main}, there are positive measured set $A_1, A_2$ such that minimal geodesics between $A_1$ and $A_2$ pass through $\gamma^0$ and thus lie in $\mathcal R_2$.
By (2.4.2), for almost all points in $A_1$ and $A_2$ are in the interior of some geodesics and thus in $\mathcal R_2$ which implies $\mathcal R=\mathcal R_2$ by (2.4.3) and \cite{Ch} (\cite{KiLa} for $\op{RCD}$-spaces).
\end{proof}

 For $k$-regular points, $k\geq 2$, by a similar argument as in Proposition\ref{main}, we have the following property. 
\begin{Prop} \label{trans}
Let $(X, d, \nu)$ be a Ricci limit space. Let $x\in \mathcal R_k\subset X$ lie in the interior of a unit speed minimal geodesic $\gamma$, i.e., $x=\gamma(t_0)\in \gamma_{\delta}$, for some $\delta>0$.  Then there are $\epsilon_0>0, \delta>r_0>0$ such that for $0\leq \epsilon<\epsilon_0, 0<r<r_0$, $x_r$ satisfying 
$$d(x, x_r)=r, \quad d(x_r, re_j)\leq \epsilon r, j\neq 1,$$
where $d_{GH}(B_r(x), B_r(0))\leq \epsilon r$, $e_1, \cdots, e_k$ are orthonormal basis of $\Bbb R^k$, and $d(\gamma(t_0+r), r e_1)\leq \epsilon r$,
 if $c(s)$ is a unit speed geodesic from $x$ to $x_r$, then for any $s$,
$$1-\frac{d(c(s),\gamma)}{s}\leq \psi(\epsilon).$$
 \end{Prop}
 \begin{proof}
 Argue by contradiction. Assume that there are $\epsilon_i\to 0, r_i\to 0$, $x_{r_i}$ and a unit speed geodesic $c_i$ from $x$ to $x_{r_i}$ such that there are $C<1$ and $s_i$ satisfying that
 $$\frac{d(c_i(s_i), \gamma)}{s_i} < C.$$
 
 Let 
 $s_i\ll r_i$ be the minimal one that  
 $$\frac{d(c_i(s_i), \gamma)}{s_i}\geq C, \, \forall\, s\geq s_i.$$
 
 As in the proof of Proposition 3.1, if $d(c(s),\gamma)=d(c(s), z_s)$ where $z_s\in \gamma$, then $d(x, z_s)\ll r_i$ and by sharp H\"older continuity,  $(d_i^{-1}X, z_{s_i})\to (\Bbb R^k, 0)$ where $d_i=Cs_i$.  Assume $c_i\to c_{\infty}$, $\gamma\to \gamma_{\infty}$. Then $c_{\infty}$ and $\gamma_{\infty}$ are two intersect lines in $\Bbb R^k$. 
By the choice of $s_i$, there are
 $s_{i1}=\max\{l, d(c(s_i+l), \gamma)<2d_i\}$, and
 $$\lim_{i\to\infty}\frac{s_{i1}}{d_i}=\frac1{C}.$$
 Then
 $$\frac{d(c_i(s_i+s_{i1}), \gamma)}{s_i+s_{i1}}<2d_i\leq Cs_i+(1+\delta_i)Cs_{i1}$$
 which is a contradiction to the choice of $s_i$ for $i$ large.  We could also use the following discussion.
 
 By induction, if $s_{ik}=\max\{l, d(c(s_i+\sum_{p=1}^{k-1}s_{ip}+l), \gamma)<2^kd_i\}$, then 
 $$\lim_{i\to\infty}\frac{s_{ik}}{2^{k-1}d_i}>\frac1{C}.$$ 
 
 However, for $k$ large, where $s_{i0}=s_i$,
 $$\sum_{p=0}^ks_{ip}= r_i-o(r_i), \quad 2^k d_i= r_i-o(r_i)$$ 
 hold for $i$ large, which can not happen. 
 \end{proof}
 
 In above proposition, for $k=2$, it seems that there is a $(2, \epsilon)$-strainer at $x$. Recall that in an Alexandrov space $X\in \op{Alex}(K)$, a $(k, \epsilon)$-stainer at $x$ is $k$ pairs of points $(a_i, b_i)$, such that
 $$\tilde \measuredangle a_ixb_i>\pi-\epsilon, \quad \tilde \measuredangle a_ixa_j>\pi/2-10\epsilon,$$
  $$\tilde \measuredangle a_ixb_j>\pi/2-10\epsilon, \quad \tilde \measuredangle b_ixb_j>\pi/2-10\epsilon,$$ 
  where $\tilde\measuredangle abc$ is the angle at $\bar b$ of comparison triangle $\bar a\bar b \bar c$ in the $K$-plane. 
  In a Ricci limit space $X$, for $2>\frac{d(b, a)}{d(b, c)}>\frac12>0$,  if we defined $\tilde\measuredangle \gamma_1 b\gamma_2=\liminf_{s\to 0} \tilde\measuredangle \gamma_1(s)b\gamma_2(s)$ where $\gamma_1$ and $\gamma_2$ are unit speed minimal geodesics from $b$ to $a$ and $c$ and the comparison triangle is in $\Bbb R^2$, we can also define ``$(k, \epsilon)$-strainer" at $b$ as in Alexandrov spaces for $k$ pairs of geodesics $(\gamma_{i1},\gamma_{i2})$. Note that in Alexandrov space, for a $(k, \epsilon)$-strainer $\{(a_i, b_i)\}_{i=1}^k$ at $x$, one may have $\frac{d(x, a_i)}{d(x, a_j)}, i\neq j$, very small where we have no definitions for this case in Ricci limit spaces. 
  
 \begin{proof}[Proof of Theorem\ref{pent-1}]
 Take $\epsilon_0$ and $r_0$ be the minimal one of Proposition\ref{main} and Proposition\ref{trans}. And for $r<r_0$, take $x_1=x_r^{\frac{\pi}{2}}$, $x_2=x_r^{-\frac{\pi}{2}}$. Let $c_i(s)$ be a unit speed minimal geodesic from $x$ to $x_i$, $i=1,2$. 
 By Proposition\ref{trans}, $c_i$ is almost perpendicular to $\gamma$. 
 
 Claim:  If  $d(c_1(s), x_s^{\frac{\pi}{2}})<\Psi(\epsilon) s$, then $d(c_2(s), x_s^{-\frac{\pi}{2}})<\Psi(\epsilon) s$. 
 
 In fact for any $s$ that $d(c_1(s), x_s^{\pm\frac{\pi}{2}})<\Psi(\epsilon) s$ and $d(c_2(s), x_s^{\mp\frac{\pi}{2}})<\Psi(\epsilon)s$, the minimal geodesic $c$ between $c_1(s)$ and $c_2(s)$ intersects with $\gamma$ and thus 
 $$\frac{d(c_1(s), c_2(s))}{s}\geq \frac{d(c_1(s), \gamma)+d(c_2(s), \gamma)}{s}\sim 2.$$ 
 And for $d(c_i(s), x_s^{\frac{\pi}{2}})<\Psi(\epsilon) s$ or $d(c_i(s), x_s^{-\frac{\pi}{2}})<\Psi(\epsilon)s$, $i=1,2$,
  $$\frac{d(c_1(s), c_2(s))}{s}\leq  \frac{2\Psi(\epsilon)s}{s}= 2\Psi(\epsilon)\ll 1.$$ 
 The claim derived  by the continuity of the function $\frac{d(c_1(s), c_2(s))}{s}$ and the initial choice of $x_1, x_2$.
 
  Then $\gamma$, $c_1$ and $c_2$ give a ``$(2, \Psi(\epsilon))$-strainer" at $x$.
 \end{proof}
 
For $2$-regular points which do not lie in any geodesics, we can also control the ``angle" of geodesics those are almost in the same geodesic. Proposition\ref{main-2} also shows that $2$-regular points are different with $k$-regular points for $k\geq 3$ by comparing with the example in \cite{CN2} (see the introduction).
\begin{proof}[Proof of Proposition\ref{main-2}]

Claim: There are $\epsilon_0>0, r_0>0$ such that for $0\leq \epsilon\leq \epsilon_0, 0<r\leq r_0$, $x_1, x_2\in X, c_1, c_2$ as the assumption and for $\gamma$, a minimal geodesic between $x_1$ and $x_2$, $d=d(x, \gamma)$, 
\begin{equation}\frac{d(c_1(d), c_2(d))}{2d}\geq 1-\Psi(\epsilon).\label{para-1}\end{equation}

For $\epsilon_i\to 0$, $r_i\to 0$, $d_{GH}(B_{r_i}(x), B_{r_i}(0))\leq \epsilon_i r_i$, take $x^i_1, x^i_2\in X$ satisfying that $d(x^i_1, r_ie_1)\leq r_i\epsilon_i$, $d(x^i_2, -r_ie_1)\leq r_i\epsilon_i$. Let $c^i_j$ be a unit speed minimal geodesic from $x$ to $x^i_j, j=1,2$, let $\gamma_i$ be a unit speed minimal geodesic between $x^i_1$ and $x^i_2$ and let $d_i=d(x, \gamma_i)=d(x, z_i)$. Without loss generality, assume $d_i>0$. Then by assumption we have that $d_i\ll r_i$ and $z_i$ is an almost middle point of $\gamma_i$, i.e., $|d(x^i_1, z_i)-d(x^i_2, z_i)|\leq \Psi(\epsilon_i)r_i$. 
Assume that $(d_i^{-1}X, x)\to (\Bbb R^2, 0)$, $\gamma_i\to \gamma$, $c^i_j\to c_j$, $j=1,2$. By assumption and the non-branching Theorem 2.2, the ray $c_j$ does not intersect with the line $\gamma$. We claim that $c_j$ is parallel with $\gamma$, $j=1,2$.

If $c_2$ is not parallel with $\gamma$, then the line that $c_2$ lies in which we still denote it by $c_2$ intersects with $\gamma$ at $y_2$. 
Then there is $c>0$ such that $\frac{d(c_2(s), \gamma)}{s}> c$, for all $s>0$. Let
$$l_i=\max\left\{l, \frac{d(c_2^i(s), \gamma_i)}{s}\geq c, \forall\, s\in (0, l) \right\}.$$ 
Then $l_i<r_i$ and
$$\frac{l_{i}}{d_i}=\frac1{\epsilon'_{i}}\to \infty, i\to \infty.$$

Let $d_{i2}=d(c^i_2(l_i), \gamma_i)=d(c^i_2(l_i), z_i^2)$.
By sharp H\"older continuity, 
$$d_{GH}(B_r(z_i^2), B_r(z_i))\leq cr d^{\alpha}(z_i^2, z_i).$$
And 
$$d_{GH}(B_{d_{i2}}(z_i), B_{d_{i2}}(x))\leq d(z_i, x)=d_i, \quad d_{i2}^{-1}d_i\leq (cl_i)^{-1}d_i\to 0,$$
thus
$$(d_{i2}^{-1} X, z_i^2)\to (\Bbb R^2, 0).$$
Assume that $c^i_2\to c_2, \gamma_i\to \gamma$, are two lines in $\Bbb R^2$. Then
$$\frac{d(c_2(s), \gamma)}{s}> c$$
still holds. And thus for $i$ large, there is $l>2\frac{l_{i}}{d_{i}}$ such that
$$\frac{d(c_2^i(s), \gamma_i)}{s}\geq c, \forall\, s\in (0, l),$$
a contradiction to the choice of $l_{i}$.

Next,  by the continuity of 
$\frac{d(c^i_1(s), c^i_2(s))}{2s}$, we have that 
$$\frac{d(c^i_1(s), c^i_2(s))}{2s}>1-\Psi(\epsilon_i).$$
In fact, if $c_1, c_2$ are the same ray, then
$$\frac{d(c^i_1(s), c^i_2(s))}{2s}\leq \Psi(\epsilon_i)$$ for $i$ large and $s$ small which is contradict to 
$$\frac{d(c^i_1(r_i), c^i_2(r_i))}{2r_i}\geq 1-2\epsilon_i.$$

Now we derive the result from the above claim. Argue by contradiction. Assume that there are $\epsilon_i\to 0$, $r_i\to 0$, $x^i_1, x^i_2\in X, c^i_1, c^i_2, \gamma_i$ as the above and $s_i$ such that 
\begin{equation}\frac{d(c^i_1(s_i), c^i_2(s_i))}{2s_i}<C<1.\label{para-2}\end{equation}
If $d_i\ll s_i$, then 
$$(s_i^{-1}X, x)\to (\Bbb R^2, 0)$$
and \eqref{para-1} implies that the limits of $c^i_1$ and $c^i_2$ lie in the same geodesic which is also the limit of $\gamma_i$. And thus 
\begin{equation}\frac{d(c_1^i(s_i), c_2^i(s_i))}{2s_i}\to 1, \label{para-3}\end{equation}
a contradiction to \eqref{para-2}.

If $s_i/d_i\to c$, then by the discussion of claim, we have that for $(s_i^{-1}X, x)\to (\Bbb R^2, 0)$ the limits of $c^i_1$ and $c^i_2$ lie in the same geodesic which are parallel with the limit of $\gamma_i$ and thus \eqref{para-3} holds.

If $s_i\ll d_i$, comparing $s_i$ with the distance of $x$ and the minimal geodesic between $c^i_1(d_i)$ and $c^i_2(d_i)$ and then using the induction argument to derive the result. 
\end{proof}

In Proposition\ref{trans}, we discuss geodesics that are almost perpendicular to a geodesic $\gamma$ in the interior at a regular point. For $2$-regular point, by Proposition\ref{main-2}, for the geodesic that are almost parallel with $\gamma$, we have that
\begin{Cor} \label{para-6}
Assume that $x\in \mathcal R_2$ lies in the interior of a geodesic $\gamma$, i.e., $x=\gamma(t_0)$. There are $\epsilon_0>0, r_0>0$ such that for $0<\epsilon\leq \epsilon_0, 0<r\leq r_0$ if $d_{GH}(B_r(x), B_r(0))\leq \epsilon r$, $x_1\in X$ satisfying that $d(x_1, re_1)\leq r\epsilon$ and $d(\gamma(t_0+r), re_1)\leq r\epsilon$, then for $c_1$, a unit speed minimal geodesic from $x$ to $x_1$ and for any $s$, $$\frac{d(c_1(s), \gamma)}{s}\leq \Psi(\epsilon).$$
\end{Cor}

\begin{Rem} \label{continuity}
As the discussion in Proposition\ref{main}, by sharp H\"older continuity Theorem\ref{holder}, we can see that if $\gamma_{\delta}\subset \mathcal R_2$, there are $\epsilon_0(\delta)>0$ and $r_0(\delta)>0$ such that Proposition\ref{trans} and Corollary\ref{para-6} holds for any $\epsilon<\epsilon(\delta), r<r_0(\delta), x\in \gamma_{\delta}$.
\end{Rem}

\end{document}